\documentclass[letterpaper, 10pt, conference]{ieeeconf}

\IEEEoverridecommandlockouts

\overrideIEEEmargins

\usepackage{amsmath}
\usepackage{amssymb}
\usepackage{amsfonts}
\usepackage[export]{adjustbox}

\usepackage{amsthm}
\usepackage{array}
\usepackage[american]{babel}

\usepackage[locale=US]{siunitx}
\DeclareSIUnit\pu{p.u.}

\usepackage{float}
\usepackage[utf8]{inputenc}
\usepackage{mathrsfs}
\usepackage[showonlyrefs=true]{mathtools}
\usepackage{tikz}
\usepackage{xcolor}
\usepackage[caption=false,font=footnotesize]{subfig}

\usepackage{graphicx}
\graphicspath{{./Figs/}}
\DeclareGraphicsExtensions{.pdf}

\DeclareMathAlphabet{\anothermymathbb}{U}{bbold}{m}{n}
\DeclareMathAlphabet{\mymathbb}{U}{BOONDOX-ds}{m}{n}
\DeclareMathOperator{\Imag}{im}
\newcommand{\laplac}{{\bf B}}
\newcommand{\bbR}{\mymathbb{R}}
\newcommand{\bbI}{\mymathbb{I}}
\newcommand{\bbone}{\mymathbb{1}}
\newcommand{\bbzero}{\mymathbb{0}}

\newtheorem{ass}{Assumption}
\newtheorem{lem}{Lemma}
\newtheorem{thm}{Theorem}
\newtheorem*{thm*}{Theorem}
\newtheorem*{crl}{Corollary}

\newtheorem{remark}{Remark}

\title{\LARGE \bf Stability of Dynamic Feedback Optimization\\with Applications to Power Systems}
\author{Sandeep Menta$^{1}$, Adrian Hauswirth$^{1,2}$, Saverio Bolognani$^{1}$, Gabriela Hug$^{2}$, and Florian D\"orfler$^{1}$%
\thanks{This work was supported by ETH Zurich funds and the SNF AP Energy Grant \#160573.}%
\thanks{$^{1}$ Automatic Control Laboratory, ETH Zurich, 8092 Zurich, Switzerland.}%
\thanks{$^{2}$ Power Systems Laboratory, ETH Zurich, 8092 Zurich, Switzerland.}%
\thanks{Corresponding author: Adrian Hauswirth, \textit{hadrian@control.ee.ethz.ch}}%
}

\begin{document}
	
	\maketitle
	\thispagestyle{empty}
    \pagestyle{empty}

	\begin{abstract}	
    We consider the problem of optimizing the steady state of a dynamical system in closed loop.
    Conventionally, the design of feedback optimization control laws assumes that the system is stationary. However, in reality, the dynamics of the (slow) iterative optimization routines can interfere with the (fast) system dynamics. 
    We provide a study of the stability and convergence of these feedback optimization setups in closed loop with the underlying plant, via a custom-tailored singular perturbation analysis result.
    Our study is particularly geared towards applications in power systems and the question whether recently developed online optimization schemes can be deployed without jeopardizing dynamic system stability. 
	\end{abstract}
	
	\section{Introduction}
	
	In this paper, we investigate the problem of designing a real-time feedback controller for a given plant that ensures convergence of the closed-loop trajectories to the solution of a prescribed optimization problem. 
    This approach can be considered a simplified instance of optimal control, in which only a terminal cost is present, and the stage cost (i.e., the cost associated to the transient behavior) is zero. 
    Conversely, the same method can be interpreted as the online implementation of an optimization algorithm, in which intermediate steps are continually applied to the system, rather than being executed offline, purely based on a model of the system.
    
    The main advantages of this approach come from its feedback nature.
    Via the measurement loop, the controlled system responds promptly to unmeasured disturbances and can therefore track the time-varying minimizer of the optimization problem that is provided as a control specification.
    Moreover, these methods are generally more robust against model mismatch and disturbances compared to an offline optimization approach, as the real system is embedded in the feedback loop instead of being simulated.
    Finally, when properly designed, a feedback approach to optimization can ensure that the trajectory of the closed-loop system satisfies input and state constraints and remains feasible while it converges to the desired optimal equilibrium.
    
    This concept has been historically adopted in different fields, for example in process control (see the review in \cite{bonvin2013}) or to engineer congestion control protocols for data networks (starting from the seminal paper \cite{kelly1998}). 
    More recently, this approach has received significant attention in the context of real-time control and optimization of power systems.
    Exploratory works in this direction have shown its potential to provide fast grid operation in order to accommodate fluctuating renewable energy sources, and to yield increased efficiency, safety, and robustness of the system trajectories, even in the presence of a substantial model mismatch.
    For example, feedback optimization has been proposed to drive the state of the power grid to the solution of an AC optimal power flow (OPF) program \cite{dallanese2016} while enforcing operational limits (either as soft constraints \cite{gan2016} or hard ones \cite{hauswirth_projected_2016,hauswirth_online_2017}) and even in the case of time-varying problem parameters \cite{dallanese2018,liu2018}.
    By formulating the problem of frequency regulation and economic re-dispatch as an optimization problem, the feedback optimization approach has also been used to analyze and design better secondary and tertiary frequency control loops \cite{low2014,cady2015,mallada2017}.
    Finally, voltage regulation problems (both to enforce voltage specifications and to maintain a safe distance from voltage collapse) can be formulated via specific cost functions, and therefore can be solved via online feedback optimization schemes \cite{bolognani2015,todescato2018}.
    For a more extensive review of the recent efforts on this topic, we refer to \cite{survey2017}.
    
    The vast majority of these works make a strict assumption on \emph{time-scale separation}, i.e. they consider that the plant is asymptotically stable and sufficiently fast, and thus can be modeled as an algebraic relation.
    As a consequence, the only dynamics observed in the closed-loop system are those induced by the feedback optimization law.
    
    In this paper we eliminate this assumption and explicitly model the plant as a linear time-invariant (LTI) system instead of an algebraic constraint.
    This reveals the detrimental interactions that can occur between the dynamics of the feedback scheme and the natural dynamics of the plant unless control parameters are properly chosen.
    Hence, we propose a design method based on singular perturbation analysis, that guarantees convergence to the minimizers of the given optimization problem while also certifying internal stability of the system.
    
    Our main result takes the form of an easily computable bound on the global controller gain that can be immediately incorporated in the design of the control loop.
    In contrast, related works like \cite{nelson_integral_2017,colombino_online_2018} arrive at complex LMI conditions as stability certificates involving all problem parameters and various degrees of freedom (scaling matrices).
    While these certificates can be evaluated to test stability of the resulting interconnection, they cannot be easily used in the control design phase.
    
    Furthermore, by performing a custom-tailored stability analysis based on a LaSalle argument, we can maintain a high level of generality and require minimal assumptions: The cost function in the given optimization program needs to have compact sublevel sets (no convexity is required), and the LTI system needs to be asymptotically stable (not controllable, nor observable, nor does it need a full rank steady-state map). This is in contrast to \cite{li2018,han_computational_2018}, for example, where the original LTI system needs to realize a gradient or saddle-point algorithm for a convex optimization problem.
    
    The remainder of the work is structured as follows:
    Section~\ref{sec:problemstatement} introduces the problem and the proposed feedback optimization scheme;
    Section~\ref{sec:stab} contains the main results about the stability of the interconnected system and its convergence to the solutions of the given optimization problem; Section~\ref{sec:application} illustrates how those results apply to our application of choice, i.e., frequency control, area balancing, and economic re-dispatch in an electric power grid.
	
	\section{Problem statement}
	\label{sec:problemstatement}
	
	\subsection{Notation}
	
	By $\bbI_n$ and	$\bbzero_{n\times m}$ we denote the identity matrix of size~$n\times n$ and the zero matrix of size $n\times m$, respectively.
	The Euclidean 2-norm of a vector $z \in \bbR^n$ is denoted by $\|z\|$. Given a matrix $A \in \bbR^{n \times m}$,  $\| A \|$ denotes the induced norm of $A$, i.e., $\| A \| := {\sup}_{\| v \| = 1} \| A v \|$. 
	The kernel and image of $A$ are denoted by $\ker A$ and $\Imag A$, respectively.
	If $A$ is symmetric, $A \succ 0$ ($A \succeq 0$) indicates that $A$ is positive definite (semidefinite).
	Given a differentiable function  $f:\bbR^n \rightarrow \bbR$, the gradient $\nabla_{x'} f(x)$ is defined as the column vector of partial derivatives of $f$ at $x$ with respect to the variables~$x'$.

	\subsection{Physical System Description}
	
	Consider an LTI system with dynamics given by
	\begin{subequations} \label{eq:sys_dyn}
	\begin{align}
	    \dot{x} &= Ax + Bu + Q w \\
	    y &= C x + D u \, ,
	\end{align}	
	\end{subequations}
	where $x \in \bbR^n$ denotes the system state, $u \in \bbR^p$ the exogenous input, $w \in \bbR^q$ a constant exogenous disturbance, $y \in \bbR^m$ the output, and the matrices $A, B, Q, C$ and $D$ are of appropriate size.
	
	\begin{ass}
		\label{ass:stab}
		The linear time-invariant system \eqref{eq:sys_dyn} is exponentially asymptotically stable. Namely, there exists a positive definite matrix $P \in \bbR^{n \times n}$ such that
	\begin{equation}\label{eq:ass_lyap}
	    A^TP + PA \preceq - \bbI_n \,.
	\end{equation}
	\end{ass}

	Under Assumption~\ref{ass:stab}, we can conclude that $A$ is invertible and the steady-state input-to-state map for fixed $u$ and $w$ is
	\begin{equation}\label{eq:steady-state_map}
        x = H u + R w,
    \end{equation}
    where $H := - A^{-1} B \in \bbR^{n\times p}$ and $R:= - A^{-1} Q \in \bbR^{n\times q}$.

    \subsection{Steady-state Optimization Problem}
	
	We consider the problem of driving the system output and the control input to a solution of the parametrized optimization problem
	\begin{subequations}\label{eq:base_opt_prob_out}
    \begin{align}
	    \underset{y, u}{\text{minimize}} & \quad \widehat{\Phi}(y, u) \\
	    \text{subject to } & \quad y = C(Hu + Rw) + D u \, ,
    \end{align}
	\end{subequations}
    where $\widehat{\Phi}: \bbR^m \times \bbR^p \rightarrow \bbR$ is a cost function and the disturbance $w$ is assumed to be a fixed parameter. 
    Clearly, the set of solutions to \eqref{eq:base_opt_prob_out} depends on the value of $w$, which is considered unknown.
    
    For the subsequent analysis, it is however beneficial to formulate~\eqref{eq:base_opt_prob_out} without loss of generality in terms of the state~$x$ rather than the output~$y$. Namely, we consider
    \begin{subequations}\label{eq:base_opt_prob}
    \begin{align}
	    \underset{x, u}{\text{minimize}} & \quad \Phi(x, u) \\
	    \text{subject to } & \quad x = Hu + Rw \, , \label{eq:opt_cstr}
    \end{align}
	\end{subequations}
    where $\Phi(x,u):= \widehat{\Phi}(C x + D u, u)$.
    
    Even further, by completely eliminating the constraint \eqref{eq:opt_cstr}, we obtain the optimization problem 
    \begin{equation}\label{eq:base_opt_prob_u}
        \underset{u}{\text{minimize}} \quad \tilde{\Phi}^w(u) \, ,
    \end{equation}
    where $\tilde{\Phi}^w(u) := \Phi(Hu + Rw, u)$. By the chain rule, we have
    \begin{align*}
        \nabla \tilde{\Phi}^w(u) = \widetilde{H}^T \nabla \Phi(Hu + Rw, u) \, ,
    \end{align*}
    where $\widetilde{H}^T := \begin{bmatrix} H^T  & \bbI_p \end{bmatrix}$. In the remainder, we let the cost function $\tilde{\Phi}^w$ satisfy the following very mild assumption.

	\begin{ass}\label{ass:cvx}
		The cost function $\tilde{\Phi}^w(u)$ is differentiable in $u$ for all $u \in \bbR^p$ and all $w \in \bbR^q$, and there exists $\ell > 0$ such that for every $x,x' \in \bbR^n$ and $u \in \bbR^p$ it holds that
		\begin{equation}\label{eq:ass_lip}
		 \left\| \widetilde{H}^T \left( \nabla \Phi (x, u) - \nabla \Phi (x', u) \right) \right\| \leq \ell \| x - x' \| \, .
		\end{equation}
		Moreover, the sublevel sets $\{u \,|\, \tilde{\Phi}^w(u) \le c\}$ of the function $\tilde{\Phi}^w(u)$ are compact for all $c \in \bbR$.
	\end{ass}

	\begin{remark}
	    Assumption~\ref{ass:cvx} is very weak. Differentiability and compactness of level sets are standard requirements for gradient flows to be well-behaved. Condition~\eqref{eq:ass_lip} is satisfied if $\tilde{\Phi}^w$ has an $\ell$-Lipschitz gradient. If only the Lipschitz constant $L$ of $\Phi$ is known, then $\ell := L \| H \|$ will satisfy~\eqref{eq:ass_lip}. However, in cases where $\Phi$ and $H$ are fully known, a tighter bound can often be recovered.
	\end{remark}

	\subsection{Gradient descent flow}
	
	In order to design a controller that steers the system~\eqref{eq:sys_dyn} to the solution of the problem~\eqref{eq:base_opt_prob}, we consider the gradient descent flow on the unconstrained problem \eqref{eq:base_opt_prob_u}, resulting in 
	\begin{align}\label{eq:grad_desc}
        \dot u 
        =  - \epsilon \nabla \tilde{\Phi}^w(u)
        = - \epsilon \widetilde{H}^T \nabla \Phi(Hu + Rw, u) \, ,
    \end{align}
    where $\epsilon > 0$ is a tuning gain to adjust the convergence rate.
    
    Equivalently, this gradient descent flow can be obtained by interconnecting the feedback law
	\begin{equation}\label{eq:fb_law}
	    \dot{u} = -\epsilon {\widetilde{H}}^T \nabla \Phi(x,u) 
	\end{equation}
	with the steady-state map~\eqref{eq:steady-state_map}. 

	Under Assumption~\ref{ass:cvx}, standard results guarantee convergence of~\eqref{eq:grad_desc} (or, equivalently, of the feedback interconnection of \eqref{eq:steady-state_map} and \eqref{eq:fb_law}) to a \emph{critical point}, i.e., to a point that satisfies first-order optimality conditions~\cite{luenberger_linear_1984} (in this case, $\nabla \tilde{\Phi}^w = 0$).
	The condition $\nabla \tilde{\Phi}^w(u^\star) = 0$ implies that
    \begin{equation*}
        \nabla \Phi(x^\star, u^\star) \in \ker \widetilde{H}^T \, ,
    \end{equation*}
    where $x^\star = H u^\star + Rw$.
    This, in turn, corresponds to the first-order optimality conditions of~\eqref{eq:base_opt_prob}, namely
    \begin{equation} \label{eq:firstorderoptimality}
        \nabla \Phi(x^\star, u^\star) \in \Imag \begin{bmatrix} \bbI_n \\ -H^T\end{bmatrix}.
    \end{equation}
    In fact, it is immediate to verify that $\widetilde{H}^T \left[\begin{smallmatrix} \bbI_n \\ -H^T\end{smallmatrix}\right] = 0$. Conversely, if $\nabla \Phi(x^\star, u^\star) \in \ker \widetilde{H}^T$ then 
    $\nabla_u \Phi(x^\star, u^\star) = -H^T \nabla_x \Phi(x^\star, u^\star)$, and therefore \eqref{eq:firstorderoptimality} holds.
    
    This convergence analysis corresponds to studying the stability of the interconnection of the feedback law \eqref{eq:fb_law} with the LTI system \eqref{eq:sys_dyn} (as illustrated in Figure~\ref{inter_sys}) under a \emph{time-scale separation} assumption.
    In the subsequent section, we will analyze the same interconnection without this assumption, in order to derive a prescription on how to tune the gain $\epsilon$ as to guarantee overall system stability. 
    
	\begin{figure}
	    \vspace{.2cm}
	    \centering
	    \includegraphics[width=0.85\columnwidth]{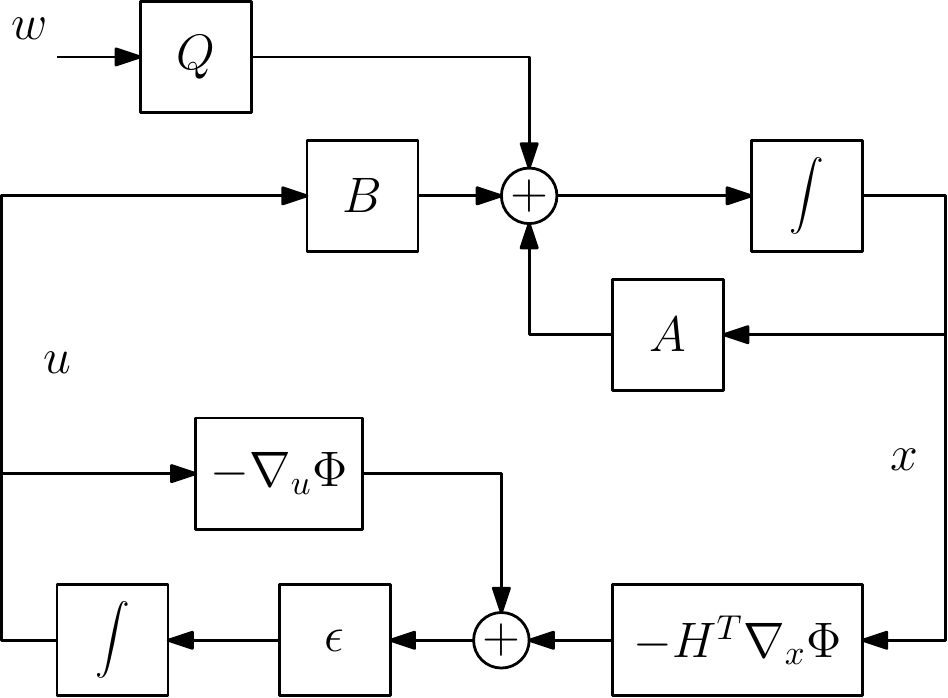}
    	\caption{Block diagram of the interconnected system \eqref{eq:sys_dyn},~\eqref{eq:fb_law}.}
    	\label{inter_sys}
	\end{figure}

	\section{Stability analysis of the gradient descent feedback control}
	\label{sec:stab}

    We now investigate the interconnected system 
    \begin{equation}
    \begin{aligned}\label{eq:ic_sys}
        \dot{x} &= f(x,u,w) &:= Ax + Bu + Q w \\
        \dot{u} & = g(x,u) &:= -\epsilon {\widetilde{H}}^T \nabla \Phi(x,u) 
    \end{aligned}
    \end{equation}
     
    Our main result on the stability of~\eqref{eq:ic_sys} reads as follows.
     
	\begin{thm}\label{thm:main}
		Under Assumptions~\ref{ass:stab} and~\ref{ass:cvx}, the closed-loop system~\eqref{eq:ic_sys} converges to the set of critical points of the optimization problem~\eqref{eq:base_opt_prob} whenever
		\begin{equation}\label{eq:main_bound}
		    \epsilon < \epsilon^\star := \frac{1}{2\ell \beta},
	    \end{equation}
	    where $\ell$ satisfies~\eqref{eq:ass_lip} and $\beta := \| P H \|$.
	    Furthermore, only strict local optimizers of~\eqref{eq:base_opt_prob} can be asymptotically stable. 
	\end{thm}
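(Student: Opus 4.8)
The plan is to treat \eqref{eq:ic_sys} as a singularly perturbed system and to build a composite Lyapunov function combining the reduced-order cost $\tilde{\Phi}^w$ with the boundary-layer energy certified by $P$. First I would pass to the error coordinate $\tilde{x} := x - (Hu + Rw)$ measuring the deviation of the state from the steady-state map \eqref{eq:steady-state_map}. Using $B = -AH$ and $Q = -AR$, the plant dynamics collapse to $\dot{\tilde{x}} = A\tilde{x} + \epsilon H g$, where $g := \widetilde{H}^{T}\nabla\Phi(x,u)$ is exactly the field driving $\dot{u} = -\epsilon g$. As a candidate I would take
\[
V(x,u) := \tilde{\Phi}^w(u) + \alpha\, \tilde{x}^{T} P \tilde{x},
\]
with a weight $\alpha>0$ to be fixed later; the first term certifies the slow gradient flow and the second is the fast boundary-layer term supplied by Assumption~\ref{ass:stab}.

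The core computation is $\dot{V}$. For the fast part, Assumption~\ref{ass:stab} gives $\tfrac{d}{dt}(\tilde{x}^{T}P\tilde{x}) = \tilde{x}^{T}(A^{T}P+PA)\tilde{x} + 2\epsilon\,\tilde{x}^{T}PHg \le -\|\tilde{x}\|^2 + 2\epsilon\beta\|\tilde{x}\|\,\|g\|$, with $\beta=\|PH\|$. The decisive step is the slow part: rather than expressing $\dot{\tilde{\Phi}}^w$ through the reduced gradient $\nabla\tilde{\Phi}^w(u)=\widetilde{H}^{T}\nabla\Phi(Hu+Rw,u)$, I would write it through the \emph{actual} field $g$. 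Setting $\delta := g - \widetilde{H}^{T}\nabla\Phi(Hu+Rw,u)$, Assumption~\ref{ass:cvx} yields $\|\delta\|\le\ell\|\tilde{x}\|$, and a short manipulation gives $\dot{\tilde{\Phi}}^w = -\epsilon\|g\|^2 + \epsilon\,\delta^{T}g \le -\epsilon\|g\|^2 + \epsilon\ell\|\tilde{x}\|\,\|g\|$. Both contributions are then controlled by the same pair $(\|g\|,\|\tilde{x}\|)$, so $\dot{V}$ is bounded by a quadratic form in these two scalars.

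It remains to render that form negative (semi)definite. Imposing the discriminant condition and then \emph{minimizing over $\alpha$} is where the sharp threshold appears: the optimal weight $\alpha = \tfrac{1}{2\beta}\big(\tfrac{1}{\epsilon\beta}-\ell\big)$ makes the form negative definite precisely when $\epsilon < 1/(2\ell\beta) = \epsilon^\star$, recovering \eqref{eq:main_bound}. I expect this $\alpha$-optimization to be the main obstacle: a naive bound that works through the reduced gradient only yields the conservative $\epsilon < 1/(4\ell\beta)$, and the factor of two is recovered solely by the $g$-based decomposition above. Granted $\dot{V}\le 0$, its zero set forces $\|g\|=0$ and $\|\tilde{x}\|=0$, i.e.\ exactly the equilibria of \eqref{eq:ic_sys}. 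Since $\tilde{\Phi}^w$ has compact sublevel sets and $P\succ 0$, the sublevel sets of $V$ are compact and trajectories are bounded; LaSalle's invariance principle then gives convergence to the set of critical points of \eqref{eq:base_opt_prob} characterized by \eqref{eq:firstorderoptimality}.

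For the final claim I would argue from the monotonicity of $V$. At any equilibrium $(x^\star,u^\star)$ one has $\tilde{x}^\star=0$, hence $V(x^\star,u^\star)=\tilde{\Phi}^w(u^\star)$. If $u^\star$ is a critical point that is not a strict local minimizer, then arbitrarily close there are points $u'$ with $\tilde{\Phi}^w(u')\le\tilde{\Phi}^w(u^\star)$; initializing on the slow manifold at $(Hu'+Rw,u')$ gives $V=\tilde{\Phi}^w(u')\le V(x^\star,u^\star)$, and since $V$ is nonincreasing the trajectory can never approach $(x^\star,u^\star)$ (its $V$-value stays strictly below, or $(Hu'+Rw,u')$ is itself a neighboring equilibrium). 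Hence such a point cannot be attractive, so only strict local minimizers of \eqref{eq:base_opt_prob} can be asymptotically stable.
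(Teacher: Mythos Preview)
Your proposal is correct and follows essentially the same route as the paper: the composite function $\tilde\Phi^w(u)+\alpha\,\tilde x^{T}P\tilde x$ is precisely the paper's LaSalle function $(1-\delta)V+\delta W$ up to reparametrization, your $g$-based splitting of $\dot{\tilde\Phi}^w$ is exactly the computation in the paper's Lemma~\ref{lem:quadraticbound}, and the optimization over the weight to extract the sharp threshold $1/(2\ell\beta)$ is the content of Lemma~\ref{lem:main_bound}. The one substantive difference is in the final step: you deduce non-attractivity of non-strict minimizers directly from monotonicity of the composite $V$ along trajectories of~\eqref{eq:ic_sys}, whereas the paper argues via the reduced gradient flow~\eqref{eq:grad_desc}; your version is slightly more self-contained, since it avoids the auxiliary claim that asymptotic stability for the interconnected system transfers to asymptotic stability for the reduced flow.
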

	
	The bound~\eqref{eq:main_bound} illustrates clearly the trade-off between the dynamic response of the physical system (as measured by $\beta$) and the steepness of the cost function $\Phi$ (as given by $\ell$), and requires that the convergence rate of the gradient flow is smaller than that of the linear system.
	
	Notice also that the convergence to critical points does not guarantee \emph{per se} convergence to a local minimizer, as first-order optimality conditions are necessary but not sufficient for optimality. 
	However, by connecting strict optimality with asymptotic stability, the theorem provides a sufficient condition to identity minimizers, similar to second-order optimality conditions in standard nonlinear programming.
	
	Under stronger conditions on the cost function $\Phi(x,u)$, an immediate conclusion can be drawn.
	
	\begin{crl}
	    Let the assumptions of Theorem~\ref{thm:main} hold.
	    Moreover, let $\Phi(x,u)$ be convex.
	    Then, under the same condition on $\epsilon$, the closed-loop system~\eqref{eq:ic_sys} converges to the set of global solutions of the optimization problem~\eqref{eq:base_opt_prob}.
	\end{crl}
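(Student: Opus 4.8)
The plan is to obtain the corollary as a direct consequence of Theorem~\ref{thm:main}. The theorem already guarantees that, whenever $\epsilon < \epsilon^\star$, the closed loop~\eqref{eq:ic_sys} converges to the set of critical points of~\eqref{eq:base_opt_prob}. Hence the only thing left to establish is that, under the additional hypothesis that $\Phi(x,u)$ is convex, this critical set coincides with the set of global solutions of~\eqref{eq:base_opt_prob}. Once that identification is made, no new estimate on $\epsilon$ is required, since the dynamic condition $\epsilon < \epsilon^\star$ is inherited verbatim from~\eqref{eq:main_bound}.

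First I would note that the constraint~\eqref{eq:opt_cstr} is affine in $(x,u)$, so its elimination as in~\eqref{eq:base_opt_prob_u} amounts to composing $\Phi$ with the affine map $u \mapsto (Hu + Rw,\,u)$. Since a convex function precomposed with an affine map remains convex, the reduced cost $\tilde{\Phi}^w(u) = \Phi(Hu + Rw, u)$ is convex in $u$ for every fixed $w$, and~\eqref{eq:base_opt_prob} is a convex program. Next I would invoke the standard fact that for a differentiable convex function stationarity is sufficient, not merely necessary, for global optimality: $\nabla \tilde{\Phi}^w(u^\star) = 0$ holds if and only if $u^\star$ globally minimizes~\eqref{eq:base_opt_prob_u}. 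Using the equivalence between $\nabla \tilde{\Phi}^w(u^\star)=0$ and the first-order condition~\eqref{eq:firstorderoptimality} already derived in the text, together with the correspondence $x^\star = Hu^\star + Rw$, this shows that every critical point of~\eqref{eq:base_opt_prob} is a global solution and conversely. The critical set and the global-solution set therefore coincide.

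Applying Theorem~\ref{thm:main} under $\epsilon < \epsilon^\star$ then yields convergence of~\eqref{eq:ic_sys} to the set of global solutions of~\eqref{eq:base_opt_prob}, as claimed. The argument is essentially immediate once the theorem is available; the only point requiring care—and thus the natural place for the proof to go wrong if stated carelessly—is the transfer of convexity from the joint cost $\Phi(x,u)$ to the reduced cost $\tilde{\Phi}^w(u)$, and the verification that the stationarity condition used to \emph{define} the critical set is exactly the condition that convexity \emph{upgrades} to global optimality. Neither step reopens the stability analysis, so the interesting dynamic content remains entirely confined to Theorem~\ref{thm:main}.
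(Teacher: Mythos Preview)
Your proposal is correct and matches the paper's treatment: the corollary is stated without an explicit proof, being regarded as an immediate consequence of Theorem~\ref{thm:main} together with the standard fact that for a convex program every critical point is a global minimizer. Your argument---transferring convexity from $\Phi$ to $\tilde{\Phi}^w$ via the affine map $u\mapsto(Hu+Rw,u)$ and then invoking sufficiency of first-order conditions---is exactly the intended one-line justification.
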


	We split the proof of Theorem~\ref{thm:main} into three parts. First, we propose a LaSalle function that is non-increasing along the trajectories of the system as long as~\eqref{eq:main_bound} is verified. Second, we apply LaSalle's invariance principle and to conclude that all trajectories converge to the set of points that satisfy the first-order optimality conditions of~\eqref{eq:base_opt_prob}. Third, we show that only minimizers of~\eqref{eq:base_opt_prob} can be asymptotically stable.
	
	\subsection*{Proof of Theorem~\ref{thm:main}: LaSalle function}
	
	In the following our choice of LaSalle function is inspired by ideas from singular perturbation analysis~\cite{khalil_nonlinear_2002, kokotovic_singular_1999}. Consider the function
	\begin{equation}\label{eq:lasalle_fct}
    	Z_\delta(x,u) = (1-\delta) V(u) + \delta W(u,x) \, ,
	\end{equation}
	where $0<\delta<1$ is a convex combination coefficient and
	\begin{align*}
		V(u)   &:= \Phi(H u + Rw, u) \\
		W(u,x) &:= (x - Hu - Rw)^T P (x - Hu - Rw) \, .
	\end{align*}
	
	Notice that $x - Hu - Rw$ implicitly defines the error of $x$ with respect to the steady state $Hu + Rw$.
	We show that $Z(x,u)$ is non-increasing along trajectories under the requirements of Theorem~\ref{thm:main}.

    \begin{lem} \label{lem:quadraticbound}
    Consider the function $Z_\delta$ as defined in \eqref{eq:lasalle_fct}. 
    Its Lie derivative along the trajectories of~\eqref{eq:ic_sys} satisfies
    \begin{equation} \label{eq:quadraticbound}
        \dot Z_\delta(u,x) \leq \begin{bmatrix} \|\psi\|  & \|\phi\| \end{bmatrix} \Lambda 
		 \begin{bmatrix} \| \psi\| \\ \|\phi\|\end{bmatrix} \, ,
    \end{equation}
    where 
    \begin{align*}
		\psi(x,u) & :=  \widetilde{H}^T \nabla \Phi(x,u)  \\
		\phi(x,u,w) & :=  x - Hu - Rw \, ,
    \end{align*}
    and
    \begin{equation} \label{eq:lasalle_matrix}
            	\Lambda := \begin{bmatrix}  
		 	 - \epsilon (1- \delta)  &
		 	  \frac{\epsilon}{2} \left( \ell (1-\delta) + \delta \beta \right)\\
		 	  \frac{\epsilon}{2} \left( \ell (1-\delta) + \delta \beta \right) &
		     - \frac{1}{2} \delta
		 \end{bmatrix}.  
    \end{equation}
    \end{lem}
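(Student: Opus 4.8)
The plan is to differentiate the two constituent pieces of $Z_\delta$ separately and then reassemble the results into the claimed quadratic form. Writing $\dot Z_\delta = (1-\delta)\dot V + \delta \dot W$, I would treat $\dot V$ and $\dot W$ in turn, in each case producing a bound that is quadratic in the two scalar quantities $\|\psi\|$ and $\|\phi\|$, and finally identify the coefficients with the entries of $\Lambda$ in~\eqref{eq:lasalle_matrix}.

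For the cost term, I would first observe that $V(u) = \tilde{\Phi}^w(u)$, so that $\nabla V(u) = \widetilde{H}^T\nabla\Phi(Hu+Rw,u)$, i.e.\ the gradient is evaluated at the \emph{steady state} $Hu+Rw$ rather than at the true state $x$. Along trajectories, $\dot V = \nabla V(u)^T\dot u = -\epsilon\,[\widetilde{H}^T\nabla\Phi(Hu+Rw,u)]^T\psi$. The natural move is to add and subtract $\psi$, writing $\widetilde{H}^T\nabla\Phi(Hu+Rw,u) = \psi - \widetilde{H}^T(\nabla\Phi(x,u) - \nabla\Phi(Hu+Rw,u))$. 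The first part yields the negative term $-\epsilon\|\psi\|^2$, while the second is controlled by Assumption~\ref{ass:cvx}: setting $x' = Hu+Rw$ in~\eqref{eq:ass_lip} gives exactly $\|\widetilde{H}^T(\nabla\Phi(x,u)-\nabla\Phi(x',u))\|\le\ell\|\phi\|$, since $x - x' = \phi$. A Cauchy--Schwarz step then delivers $\dot V \le -\epsilon\|\psi\|^2 + \epsilon\ell\,\|\phi\|\,\|\psi\|$, which accounts for the $(1-\delta)$-weighted entries of $\Lambda$ (and contributes nothing to the $\|\phi\|^2$ coefficient).

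For the error term, the key preliminary step --- the one I expect to carry the most weight --- is to rewrite the plant dynamics in the error coordinate $\phi = x - Hu - Rw$. Using $H = -A^{-1}B$ and $R = -A^{-1}Q$ from~\eqref{eq:steady-state_map}, one has $Bu + Qw = -A(Hu+Rw)$, so that $\dot x = Ax + Bu + Qw = A\phi$ and hence $\dot\phi = \dot x - H\dot u = A\phi + \epsilon H\psi$. Differentiating $W = \phi^T P\phi$ then gives $\dot W = \phi^T(A^TP + PA)\phi + 2\epsilon\,\phi^T PH\psi$. Here Assumption~\ref{ass:stab} bounds the first term by $-\|\phi\|^2$, and Cauchy--Schwarz together with the definition $\beta = \|PH\|$ bounds the cross term by a multiple of $\beta\,\|\phi\|\,\|\psi\|$. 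This produces the $\delta$-weighted entries of $\Lambda$, the precise numerical factors following from the normalization of $W$.

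Finally, I would collect the coefficients of $\|\psi\|^2$, $\|\phi\|\,\|\psi\|$, and $\|\phi\|^2$ in $(1-\delta)\dot V + \delta\dot W$ and recognize them as the entries of the symmetric matrix $\Lambda$, yielding~\eqref{eq:quadraticbound}. The two places demanding care are (i) the consistent bookkeeping of the \emph{steady-state} gradient appearing in $\nabla V$ versus the \emph{true-state} gradient appearing in both $\psi$ and $\dot u$ --- it is precisely the mismatch between these, measured through $\phi$ via~\eqref{eq:ass_lip}, that generates the off-diagonal coupling in $\Lambda$ --- and (ii) the coordinate change $\dot x = A\phi$, which is what turns $W$ into a genuine Lyapunov certificate for the plant subsystem and lets Assumption~\ref{ass:stab} contribute the stabilizing $-\|\phi\|^2$ term.
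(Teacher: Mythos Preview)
Your proposal is correct and follows essentially the same route as the paper: bound $\dot V$ and $\dot W$ separately via Cauchy--Schwarz together with Assumptions~\ref{ass:stab} and~\ref{ass:cvx}, then assemble the quadratic form in $\|\psi\|,\|\phi\|$. The only cosmetic difference is that you compute $\dot W$ through the error dynamics $\dot\phi = A\phi + \epsilon H\psi$, whereas the paper splits it as $\nabla_x W\cdot f + \nabla_u W\cdot g$; these are equivalent. One caution on the ``precise numerical factors'': carrying out $\dot W = 2\phi^T P\dot\phi$ carefully yields $-\|\phi\|^2 + 2\epsilon\beta\|\phi\|\|\psi\|$, which differs by a factor of~$2$ from what the paper records in $\Lambda$ (the paper's derivation drops the $2$ in the gradient of the quadratic $W$); this is immaterial downstream, since it amounts to a reparametrization of $\delta$ and produces the same $\epsilon^\star$ in Lemma~\ref{lem:main_bound}, but your coefficients will not literally match~\eqref{eq:lasalle_matrix} as stated.
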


    \begin{proof}
    The Lie derivative of $Z_\delta(x,u)$ along the trajectories of~\eqref{eq:ic_sys} is given by
	\begin{multline}\label{eq:lie_deriv}
		\dot{Z_\delta}(u,x) =  (1- \delta) \nabla_u V(u) g(x,u) \\ + \delta \nabla_x W(x,u) f(x,u) + \delta \nabla_u W(x,u) g(x,u) \, .
	\end{multline}
	Each of the terms in \eqref{eq:lie_deriv} can be bounded with the help of the two functions $\phi(x,u,w)$ and $\psi(x,u)$.
	Namely, for the first term we can write 	
	\begin{align*}
		&\nabla_u V(u) g(x,u) \\
		& \quad = - \epsilon \nabla \Phi(H u + Rw ,u)^T \widetilde{H} \widetilde{H}^T \nabla \Phi(x, u) \\
		& \quad = - \epsilon \left( \nabla \Phi(H u + Rw ,u) - \nabla \Phi(x,u) \right)^T \widetilde{H} \psi  \\	
		& \quad \qquad - \epsilon \nabla \Phi(x,u)^T \widetilde{H} \widetilde{H}^T \nabla \Phi(x,u) \\		
		& \quad = - \epsilon \left( \nabla \Phi(H u + Rw ,u) - \nabla \Phi(x,u) \right)^T \widetilde{H} \psi   - \epsilon \|\psi\|^2 \\
		& \quad \leq \epsilon \| \widetilde{H}^T \left( \nabla \Phi(H u + Rw ,u)  - \nabla \Phi(x,u) \right) \| \| \psi \|   - \epsilon \|\psi\|^2 \\
	    & \quad \leq \epsilon \ell \| x - H u - Rw \| \| \psi \|   - \epsilon \| \psi\|^2 \\			
		& \quad \leq \epsilon \ell \| \phi \| \| \psi \| - \epsilon \|\psi\|^2 \, ,
	\end{align*}
	where we have used the definition of $\ell$ from Assumption~\ref{eq:ass_lip}.

	For the second term in~\eqref{eq:lie_deriv} we get
	\begin{align*}
	   & \nabla_x W(x,u) f(x,u) = \\
		& \quad = (x - Hu - Rw)^T P (A x + B u + Qw) \\
		& \quad = (x - Hu - Rw)^T P A ( x - H u - Rw) \\
		& \quad = \tfrac{1}{2} \phi^T (PA + A^T P) \phi  \leq - \tfrac{1}{2} \| \phi\|^2 \, ,
	\end{align*}
	and finally for the third term we get
	\begin{align*}
		& \nabla_u W(x,u) g(x,u) =\\
		& \quad = - \epsilon (x - Hu - Rw)^T P H \widetilde{H}^T \nabla \Phi(x,u)
		 \leq   \epsilon \beta  \| \phi\| \| \psi \| \, .
	\end{align*}
	Therefore the Lie derivative of $Z_\delta$ is bounded by a quadratic form in $\|\phi\|$ and $\|\psi\|$, that can be rewritten in matricial form as in \eqref{eq:quadraticbound}.
\end{proof}

We can characterize under which conditions $ \dot Z_\delta(u,x) \leq 0$ by verifying when $\Lambda$ is negative definite, as proposed in the following lemma.

\begin{lem} \label{lem:main_bound}
	Consider $\Lambda$ from~\eqref{eq:lasalle_matrix}.
	Whenever $\epsilon$ satisfies the bound \eqref{eq:main_bound}, then $\Lambda \prec 0 $ for some $\delta^\star \in (0, 1)$.
\end{lem}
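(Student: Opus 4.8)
The plan is to apply Sylvester's criterion to the $2\times 2$ symmetric matrix $\Lambda$: it is negative definite precisely when its $(1,1)$ entry is negative and its determinant is positive. The $(1,1)$ entry equals $-\epsilon(1-\delta)$, which is strictly negative for every $\delta\in(0,1)$ because $\epsilon>0$; so this condition comes for free and the entire difficulty concentrates on the determinant.

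First I would compute
\begin{equation*}
\det\Lambda = \frac{\epsilon\,\delta(1-\delta)}{2} - \frac{\epsilon^2}{4}\bigl(\ell(1-\delta)+\delta\beta\bigr)^2,
\end{equation*}
and observe that, after dividing by the positive quantity $\epsilon/4$, the requirement $\det\Lambda>0$ is equivalent to
\begin{equation*}
\epsilon < h(\delta) := \frac{2\delta(1-\delta)}{\bigl(\ell(1-\delta)+\delta\beta\bigr)^2}.
\end{equation*}
Thus $\Lambda\prec0$ holds for some admissible $\delta$ if and only if $\epsilon$ lies strictly below the supremum of $h$ over $(0,1)$, and the claim reduces to showing that this supremum equals $\epsilon^\star=\frac{1}{2\ell\beta}$.

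Next I would maximize $h$ on $(0,1)$. Setting the logarithmic derivative to zero gives
\begin{equation*}
\frac{1-2\delta}{\delta(1-\delta)} = \frac{2(\beta-\ell)}{\ell(1-\delta)+\delta\beta}.
\end{equation*}
The key simplification — and the step I expect to carry the argument — is that upon clearing denominators the quadratic terms in $\delta$ cancel identically, leaving the linear equation $\ell-(\ell+\beta)\delta=0$ and hence the unique interior critical point $\delta^\star=\frac{\ell}{\ell+\beta}\in(0,1)$. Substituting $1-\delta^\star=\frac{\beta}{\ell+\beta}$ yields $\ell(1-\delta^\star)+\delta^\star\beta=\frac{2\ell\beta}{\ell+\beta}$ and $2\delta^\star(1-\delta^\star)=\frac{2\ell\beta}{(\ell+\beta)^2}$, so that $h(\delta^\star)=\frac{1}{2\ell\beta}=\epsilon^\star$. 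A brief check that $h\to 0$ at both endpoints confirms that $\delta^\star$ is the global maximizer, whence $\max_{\delta\in(0,1)}h(\delta)=\epsilon^\star$.

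Finally, whenever $\epsilon<\epsilon^\star$ I would take $\delta=\delta^\star$ (or any $\delta$ near it at which $h(\delta)>\epsilon$); then $\det\Lambda>0$ while the $(1,1)$ entry is negative, giving $\Lambda\prec0$ and completing the proof. The main obstacle is purely the one-variable optimization of $h$; everything else is bookkeeping, and the fortunate cancellation of the $\delta^2$ terms is what makes the optimal threshold come out to the clean value $\frac{1}{2\ell\beta}$ advertised in Theorem~\ref{thm:main}.
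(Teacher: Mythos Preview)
Your proposal is correct and follows essentially the same approach as the paper's own proof: reduce negative definiteness of the $2\times2$ matrix to the determinant condition, rewrite it as $\epsilon < h(\delta)$, and maximize $h$ over $(0,1)$ to obtain $\delta^\star=\frac{\ell}{\ell+\beta}$ and $h(\delta^\star)=\frac{1}{2\ell\beta}$. The paper simply asserts the location and value of the maximum, whereas you fill in the derivative computation and the endpoint check; there is no substantive difference.
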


\begin{proof}
	The proof is standard~\cite{kokotovic_singular_1999}. Namely, $\Lambda$ has negative diagonal elements and hence as a $2\times 2$-matrix is negative definite if and only if
	\begin{align*}
		0 < \frac{\epsilon}{2} \delta (1-\delta) - \frac{\epsilon^2}{4} \left( l(1-\delta) + \beta \delta \right)^2 \, .
	\end{align*}
	Equivalently, one can solve for $\epsilon$ and get
	\begin{align*}
		 \epsilon < \frac{2 \delta (1- \delta)}{\left( l(1-\delta) + \beta \delta \right)^2} \, .
	\end{align*}
	The right-hand side of this expression can be shown to have a maximum at $\delta^\star = \frac{ \ell}{\ell + \beta}$ which is $\epsilon^\star = \frac{1}{2 \beta \ell}$.
\end{proof}

    \subsection*{Proof of Theorem~\ref{thm:main}: Convergence to Critical Points}
    
    \begin{lem}
    Let $\epsilon$ satisfy \eqref{eq:main_bound}, and let $\delta^\star$ be defined as in Lemma~\ref{lem:main_bound}.
	Then $\dot Z_{\delta^\star}(x,u) \le 0$ for all $(x,u)$, and $\dot Z_{\delta^\star}(x,u) = 0$ if and only if $(x,u)\in E$, where
	\begin{equation}
	    E = \left\{
	        (x,u) \,|\, 
	        x = Hu + Rw,
	        \nabla \Phi(x^\star, u^\star) \in \ker \widetilde{H}^T 
		    \right\}.
    \label{eq:setE}
	\end{equation}
	Moreover, every point in $E$ is an equilibrium.
	\label{lem:vdotzero}
	\end{lem}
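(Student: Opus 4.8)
The plan is to combine Lemmas~\ref{lem:quadraticbound} and~\ref{lem:main_bound} for the sign claim, and then to pin down the zero set by examining when the quadratic bound is tight. First I would invoke Lemma~\ref{lem:main_bound}: with $\delta = \delta^\star = \ell/(\ell+\beta)$ the matrix $\Lambda$ is negative definite, so the quadratic form on the right-hand side of~\eqref{eq:quadraticbound} is nonpositive for every value of the pair $(\|\psi\|, \|\phi\|)$. Together with the bound of Lemma~\ref{lem:quadraticbound} this immediately yields $\dot Z_{\delta^\star}(x,u) \le 0$ for all $(x,u)$, establishing the first claim.

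For the characterization of the zero set I would split into two cases according to whether $(\|\psi\|, \|\phi\|)$ vanishes. Since $\Lambda \prec 0$, the quadratic form is \emph{strictly} negative whenever $(\|\psi\|,\|\phi\|) \neq (0,0)$; hence at any such point $\dot Z_{\delta^\star}(x,u) < 0$ by~\eqref{eq:quadraticbound}. Contrapositively, $\dot Z_{\delta^\star}(x,u) = 0$ forces $\|\psi\| = \|\phi\| = 0$. Unpacking the definitions, $\phi = 0$ is exactly $x = Hu + Rw$, and $\psi = \widetilde{H}^T \nabla \Phi(x,u) = 0$ is exactly $\nabla \Phi(x,u) \in \ker \widetilde{H}^T$, so these two conditions describe precisely the set $E$ in~\eqref{eq:setE}.

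The remaining subtlety---and the step I expect to be the main obstacle---is the converse inclusion: on $E$ the inequality~\eqref{eq:quadraticbound} guarantees only $\dot Z_{\delta^\star} \le 0$, not equality, so I must verify $\dot Z_{\delta^\star} = 0$ \emph{directly} from the three-term expansion~\eqref{eq:lie_deriv} rather than from the bound. Substituting $\phi = 0$ and $\psi = 0$, I would argue term by term. In the first term, $\phi = 0$ means $\nabla \Phi(Hu+Rw,u) = \nabla \Phi(x,u)$, so the gradient-difference cross term drops and what remains is $-\epsilon \|\psi\|^2 = 0$. The second term is $\tfrac{1}{2}\phi^T(PA + A^TP)\phi = 0$ since $\phi = 0$. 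The third term $-\epsilon\,\phi^T P H \widetilde{H}^T \nabla \Phi(x,u) = -\epsilon\,\phi^T P H \psi$ vanishes because $\phi = 0$. Hence all three contributions are zero, giving $\dot Z_{\delta^\star} = 0$ on $E$ and completing the equivalence.

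Finally, for the equilibrium claim I would check $\dot u = 0$ and $\dot x = 0$ pointwise on $E$. The input dynamics give $\dot u = g(x,u) = -\epsilon\,\widetilde{H}^T\nabla\Phi(x,u) = -\epsilon\psi = 0$. For the state, using $H = -A^{-1}B$ and $R = -A^{-1}Q$ from~\eqref{eq:steady-state_map}, on $E$ we have $Ax = A(Hu + Rw) = -Bu - Qw$, so $\dot x = Ax + Bu + Qw = 0$. Thus every point of $E$ is an equilibrium of~\eqref{eq:ic_sys}, which finishes the proof.
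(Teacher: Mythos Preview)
Your proof is correct and follows essentially the same path as the paper: both combine Lemmas~\ref{lem:quadraticbound} and~\ref{lem:main_bound} to get $\dot Z_{\delta^\star}\le 0$, use the strict negative definiteness of $\Lambda$ to force $\|\psi\|=\|\phi\|=0$ whenever $\dot Z_{\delta^\star}=0$, and then check that every point of $E$ is an equilibrium of~\eqref{eq:ic_sys}.

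The one organizational difference is in how the converse inclusion $E\subseteq\{\dot Z_{\delta^\star}=0\}$ is obtained. You verify it by direct term-by-term evaluation of~\eqref{eq:lie_deriv} at $\phi=\psi=0$. The paper instead establishes first that every point of $E$ is an equilibrium and then observes that, at an equilibrium, the Lie derivative of any function along the flow is automatically zero; this yields $\dot Z_{\delta^\star}=0$ on $E$ without revisiting the individual terms. The paper's route is slightly more economical (the equilibrium check does double duty), while yours makes the vanishing of each term explicit; both are entirely valid.
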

	
    \begin{proof}
    We know from Lemma~\ref{lem:quadraticbound} and Lemma~\ref{lem:main_bound} that $\dot{Z}_{\delta^\star}(x,u)$ is upper-bounded by a non-positive function of $(x,u)$, and therefore $\dot Z_{\delta^\star}(x,u) \le 0$.
    We also know from the same lemmas that the set of points $(x,u)$ where $\dot{Z}_{\delta^\star}(x,u) = 0$ is a subset of the set of points where $\|\psi\|=0$ and $\|\phi\|=0$, which corresponds to the set $E$ defined in \eqref{eq:setE}.
	It can be verified by inspection of the system dynamics \eqref{eq:ic_sys} that all the elements of $E$ are equilibria, hence $E$ is the exhaustive set of solutions to $\dot{Z}_{\delta^\star}(x,u) = 0$.
    \end{proof}

    \begin{lem}
        Let $\epsilon$ satisfy \eqref{eq:main_bound}, and let $\delta^\star$ be defined as in Lemma~\ref{lem:main_bound}.
	    The sublevel sets of $Z_{\delta^\star}(x,u)$ are compact and positively invariant for the dynamics \eqref{eq:ic_sys}.
	\label{lem:positiveinvariance}
	\end{lem}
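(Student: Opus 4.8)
The plan is to treat the two claims separately. \emph{Positive invariance} is essentially immediate from Lemma~\ref{lem:vdotzero}: since $\dot Z_{\delta^\star}(x,u)\le 0$ everywhere, $Z_{\delta^\star}$ is nonincreasing along any trajectory of~\eqref{eq:ic_sys}; hence if $Z_{\delta^\star}(x(0),u(0))\le c$ then $Z_{\delta^\star}(x(t),u(t))\le c$ for all $t\ge 0$, so the sublevel set $S_c:=\{(x,u)\,|\,Z_{\delta^\star}(x,u)\le c\}$ is positively invariant. The real work is establishing \emph{compactness} of $S_c$ for every $c$.

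Since $\Phi$ is differentiable (Assumption~\ref{ass:cvx}) and $W$ is a quadratic form, $Z_{\delta^\star}$ is continuous, so $S_c$ is closed; by Heine--Borel it then suffices to show $S_c$ is bounded. First I would bound the $u$-component. Because $P\succeq 0$ we have $W(u,x)\ge 0$, so on $S_c$ we get $(1-\delta^\star)\tilde{\Phi}^w(u)\le Z_{\delta^\star}(x,u)\le c$, giving $\tilde{\Phi}^w(u)\le c/(1-\delta^\star)$ (note $0<\delta^\star<1$ by Lemma~\ref{lem:main_bound}). Hence the $u$-component lies in the sublevel set $K:=\{u\,|\,\tilde{\Phi}^w(u)\le c/(1-\delta^\star)\}$, which is compact by Assumption~\ref{ass:cvx}; in particular $\|u\|$ is bounded on $S_c$.

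Next I would bound the $x$-component using $P\succ 0$. Continuity of $\tilde{\Phi}^w$ on the compact set $K$ yields a finite lower bound $m_K:=\min_{u\in K}\tilde{\Phi}^w(u)$, so on $S_c$ we have $\delta^\star W(u,x)=Z_{\delta^\star}(x,u)-(1-\delta^\star)\tilde{\Phi}^w(u)\le c-(1-\delta^\star)m_K$, a finite constant. Since $W(u,x)\ge \lambda_{\min}(P)\,\|x-Hu-Rw\|^2$ with $\lambda_{\min}(P)>0$, this bounds $\|x-Hu-Rw\|$, and the triangle inequality $\|x\|\le \|x-Hu-Rw\|+\|H\|\,\|u\|+\|R\|\,\|w\|$ together with the bound on $\|u\|$ (and $w$ fixed) bounds $\|x\|$. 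Thus $S_c$ is bounded, hence compact.

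The main obstacle is the $x$-direction: the hypothesis of Assumption~\ref{ass:cvx} only controls sublevel sets of $\tilde{\Phi}^w$ in $u$, and $Z_{\delta^\star}$ carries no information in the centered coordinate $x-Hu-Rw$ from the cost term alone; it is precisely the positive definiteness of $P$ (from Assumption~\ref{ass:stab}) that makes $W$ coercive in $x-Hu-Rw$ and rescues boundedness. Care is needed to extract the lower bound $m_K$ from compactness of $K$ rather than assuming $\tilde{\Phi}^w$ is globally bounded below, and to confirm $\delta^\star\in(0,1)$ so that the coefficient $1-\delta^\star$ is strictly positive.
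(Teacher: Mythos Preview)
Your proposal is correct and follows essentially the same argument as the paper: bound $u$ via $W\ge 0$ and the compact sublevel sets of $\tilde{\Phi}^w$, then bound $x$ via a lower bound on $V$ over the resulting compact $u$-set together with $P\succ 0$, and deduce positive invariance from $\dot Z_{\delta^\star}\le 0$. In fact you are more careful than the paper with the factors $(1-\delta^\star)$ and $\delta^\star$, which the paper's write-up suppresses.
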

	
	\begin{proof}
	Consider a sublevel set $\Omega_c:=\{(x,u) \,|\, Z_{\delta^\star}(x,u) \le c\}$, for some $c\in \bbR$.

    First notice that, as $W(x,u)\ge0$, $Z_{\delta^\star}(x,u) \le c$ implies $V(u) \le c$.
    But $V(u)=\tilde{\Phi}^w(u)$, which by Assumption~\ref{ass:cvx} has compact sublevel sets, and therefore there exist $U$ such that $\|u\| \le U$ for all $(x,u)\in \Omega_c$.

    On the compact set $\{u\,|\,\|u\| \le U\}$ the continuous function $V(u)$ is also lower bounded by some value $L$.
    We therefore have that $W(x,u) \le c - L$ in $\Omega_c$.
    As $P$ is positive definite, we must have that 
    $\|x-Hu-Rw\|^2 \le (c-L)/\lambda_{\min}(P)$.
    We then have
    \begin{align*}
        \|x\| &\le \|x-Hu-Rw\| + \|Hu\| + \|Rw\|\\
            &\le \sqrt{\frac{c - L}{\lambda_{\min}}} + \|H\| \|u\| + \|Rw\| \\
            &\le \sqrt{\frac{c - L}{\lambda_{\min}}} + \|H\| U + \|Rw\|,
    \end{align*}
    and therefore $\|x\|$ is also bounded for all $(x,u) \in \Omega_c$.
    
    Positive invariance of $\Omega_c$ follows from the fact that $\dot Z_{\delta^\star}(x,u) \le 0$ (via Lemma~\ref{lem:vdotzero}).
	\end{proof}

    Lemmas \ref{lem:vdotzero} and \ref{lem:positiveinvariance} allow us to use LaSalle's invariance principle \cite[p.128]{khalil_nonlinear_2002} (which we briefly recall hereafter).
    
	\begin{thm*}[LaSalle's Invariance Principle]
		Let $\Omega \subset D (\subset \bbR^n)$ be a compact set that is positively invariant with respect to an autonomous system $\dot \eta = F(\eta)$, where $F$ is locally Lipschitz.
		Let $Z : D \rightarrow \bbR$ be a continuously differentiable function such that $\dot{Z}(\eta) \leq 0$ in $\Omega$.
		Let $E$ be the set of all the points in $\Omega$ where $\dot{Z}(\eta) = 0$.
		Let $M$ be the largest invariant set in $E$.
		Then every solution starting in $\Omega$ approaches $M$ as $t \rightarrow \infty$.
	\end{thm*}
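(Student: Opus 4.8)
The plan is to prove this classical invariance result through the theory of positive limit sets for bounded solutions of autonomous systems. Fix an arbitrary initial condition $\eta_0 \in \Omega$ and let $\eta(t)$ denote the unique maximal solution of $\dot\eta = F(\eta)$ with $\eta(0) = \eta_0$; uniqueness and continuous dependence on initial data are guaranteed because $F$ is locally Lipschitz. The first step is to argue that $\eta(t)$ is defined for all $t \ge 0$ and remains in $\Omega$: positive invariance keeps the trajectory inside the compact set $\Omega$, which simultaneously rules out finite escape time and supplies the boundedness of the orbit that the later steps rely on.

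Next I would establish that $Z(\eta(t))$ converges to a limit. Since the trajectory lies in $\Omega$, we have $\frac{d}{dt} Z(\eta(t)) = \dot Z(\eta(t)) \le 0$, so $t \mapsto Z(\eta(t))$ is non-increasing. As $Z$ is continuous and $\Omega$ is compact, $Z$ is bounded below on $\Omega$, and a non-increasing function bounded below converges; denote its limit by $a := \lim_{t\to\infty} Z(\eta(t))$.

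The core of the argument concerns the positive limit set
\[
    L^+ := \{\, p \in \bbR^n \mid \exists\, t_n \to \infty \text{ with } \eta(t_n) \to p \,\}.
\]
Because the orbit is contained in the compact set $\Omega$, the standard structural facts about limit sets apply: $L^+$ is nonempty, compact, contained in $\Omega$, invariant under the flow, and the trajectory approaches it, i.e. the distance from $\eta(t)$ to $L^+$ tends to $0$ as $t \to \infty$. I would then show $Z \equiv a$ on $L^+$: for any $p \in L^+$ with $\eta(t_n) \to p$, continuity gives $Z(p) = \lim_n Z(\eta(t_n)) = a$. Since $L^+$ is invariant and $Z$ is constant on it, $\dot Z$ must vanish along every trajectory confined to $L^+$, so $L^+ \subset E$. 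By invariance of $L^+$ together with the definition of $M$ as the largest invariant set contained in $E$, we conclude $L^+ \subset M$. Combining this with the convergence of $\eta(t)$ to $L^+$ yields that $\eta(t)$ approaches $M$ as $t \to \infty$, which is the claim.

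The main obstacle is not the monotonicity bookkeeping but the invocation of the structural properties of $L^+$ — nonemptiness, invariance, and convergence of the trajectory to it — for a bounded solution of an autonomous ODE. These are classical (see \cite{khalil_nonlinear_2002}), but their proofs rest crucially on boundedness of the orbit (secured here by positive invariance of the compact $\Omega$) and on uniqueness and continuous dependence of solutions (secured by the local Lipschitz hypothesis on $F$). Invariance of $L^+$ in particular requires a sequential-compactness argument: given $p \in L^+$ realized as $\eta(t_n) \to p$, one pushes each $t_n$ forward by a fixed time $\tau$ and uses continuity of the flow to show $\eta(t_n + \tau)$ converges to the point reached from $p$ after time $\tau$, which therefore also lies in $L^+$. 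Once these facts are granted, the remainder of the proof is the short limiting argument sketched above.
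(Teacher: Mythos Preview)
Your proposal is correct and follows the standard textbook route via positive limit sets; this is precisely the argument one finds in \cite[p.~128]{khalil_nonlinear_2002}. However, the paper itself does \emph{not} prove this statement: it is merely recalled as a known result, with the proof delegated entirely to the citation. So there is no ``paper's own proof'' to compare against---your write-up simply supplies what the authors chose to quote rather than reprove.
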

	
   We therefore conclude that every solution will approach the largest invariant set $M$ contained in $E$, as defined in \eqref{eq:setE}, which is $E$ itself.
   As shown with \eqref{eq:firstorderoptimality}, each point in $E$ is a critical point for~\eqref{eq:base_opt_prob}, i.e., it satisfies the first-order optimality conditions for the optimization problem at hand.

	\subsection*{Proof of Theorem~\ref{thm:main}: Asymptotic Stability of Strict Minimizers}
	
    To prove the final statement of Theorem~\ref{thm:main}, we show that an asymptotically stable equilibrium $z^\star := (x^\star, u^\star)$ of~\eqref{eq:ic_sys} is a strict optimizer of~\eqref{eq:base_opt_prob}. In other words, on the feasible set $\mathcal{Z} := \{ (x,u) \, | \, x = Hu + Rw \}$ the point $z^\star$ is a strict minimizer of $\Phi(u,x)$.
    
    We first show that $z^\star$ is a local minimizer. Consider a neighborhood $\mathcal{V} \subset \mathcal{Z}$ of $z^\star$ such that the respective trajectories of~\eqref{eq:ic_sys} and of the reduced system~\eqref{eq:grad_desc} both initialized at any $z_0$ in $\mathcal{V}$ converge to $z^\star$. (Such a neighborhood can be constructed as the intersection of the regions of attraction of $z^\star$ under~\eqref{eq:ic_sys} and under~\eqref{eq:grad_desc} since $x^\star$ is asymptotically stable for both of them.) Even though $z_0, z^\star \in \mathcal{Z}$, the trajectory $z(t)$ of~\eqref{eq:ic_sys} starting at $z_0$ and converging to $z^\star$ may not be contained in $\mathcal{Z}$. On the other hand, the trajectory $z'(t)$ of~\eqref{eq:grad_desc} starting at $z_0$ and converging to $z^\star$ will always lie in $\mathcal{Z}$ by definition. Furthermore, $\Phi$ is non-increasing along $z'(t)$ and therefore $\Phi(z^\star) \leq \Phi(z_0)$. Since $z_0$ is arbitrary in $\mathcal{V}$ we therefore conclude that $z^\star$ is a minimizer of $\Phi$ on $\mathcal{V}$.

    For strict minimality of $z^\star$ on $\mathcal{Z}$, let $\bar{z} \in \mathcal{V}$, but with $\Phi(\bar{z}) \leq \Phi(z^\star)$.
    Every solution $z(t)$ of~\eqref{eq:grad_desc} with $z(0) = \bar{z}$ nevertheless converges to $z^\star$ by assumption on $\mathcal{V}$, and hence it follows that $\nabla \tilde{\Phi}^w(u(t)) = 0$ for almost all $t \geq 0$ (i.e., $z(t)$ is an equilibrium for almost all $t$). This, however, contradicts the fact that $z^\star$ is asymptotically stable. 
    Therefore, we conclude that asymptotically stable equilibria are strict minimizers.
    
    This completes the proof of Theorem~\ref{thm:main}.

\section{Application to Power Systems}	
\label{sec:application}

Our forthcoming power system model is deliberately simple and idealized to illustrate the application of the stability bound derived in the previous section. However, the full generality of the proposed design method allows to do the same for much more complicated and high-dimensional linear models and even gain insights for nonlinear models on a local scale.

We consider a connected electric power transmission system with $r$ buses and $s$ lines. To every node $i$ we associate a phase angle $\theta_i$ which we collect in the vector $\theta \in \bbR^r$. We assume that lines are lossless, voltages are close to nominal and phase angle differences are small. Hence, we approximate power flow by the classical \emph{DC power flow} equations
\begin{align*}
    p^E = \laplac \theta  \, ,
\end{align*}
where $p^E \in \bbR^r$ denotes the power injection into the grid at each node. Further, $\laplac \in \bbR^{r \times r}$ is the graph Laplacian of the network defined as
\begin{align*}
    \left[ \laplac \right]_{ij} :=
    \begin{cases}
        \sum_{k \in \mathcal{N}(i)} b_{ik} \quad & i = j \\
        - b_{ij} \quad & j \in \mathcal{N}(i) \\
        0 \quad & \text{otherwise, } 
    \end{cases}
\end{align*}
where $b_{ij} > 0$ denotes the susceptance of the line connecting nodes $i$ and $j$, and $\mathcal{N}(i)$ denotes the set of all nodes that connect to node $i$.

Further, given the phase angles $\theta$, the line flows are computed as $p^\ell_{ij} = b_{ij}( \theta_i - \theta_j)$ which we write in matrix notation as
\begin{align*}
  p^\ell = \laplac^\ell \theta \, ,
 \end{align*}
where $p^\ell \in \bbR^s$ and $\laplac^\ell \in \bbR^{s \times r}$.

We assume that a single generator is connected to each node.
Every node is hence characterized by a deviation $\omega_i$ from the nominal frequency.
Off-nominal frequencies cause phase angles to drift according to 
\begin{align}\label{eq:angle_dyn}
   \dot{\theta} = \omega \, .
\end{align}
Further, we assume that $\omega$ follows the swing dynamics
\begin{align}\label{eq:swing}
    \dot \omega = - M^{-1} \left(D \omega + \laplac \theta - p^M + p^L(t)\right) \, ,
\end{align}
where $M \in \bbR^{r \times r}$ and $D \in \bbR^{r \times r}$ are full rank diagonal matrices that collect (positive) \emph{inertia} and \emph{damping} constants, respectively. Further, $p^L(t) \in \bbR^r$ denotes a (possibly time-varying) vector of exogenous power demands at each bus.

Finally, the mechanical power output $p^M \in \bbR^r$ of the generators is subject to simple turbine-governor dynamics
\begin{align}\label{eq:gov}
    \dot p^M = - T^{-1} \left(R^{-1} \omega +  p^M - p^C(t) \right) \, ,
\end{align}
where $T, R \in \bbR^{r \times r}$ are full-rank, diagonal matrices that describe the (positive) \emph{control gain} and \emph{droop constants} respectively, and $p^C(t) \in \bbR^r$ denotes generation setpoints.

Together, the ODEs~\eqref{eq:angle_dyn},~\eqref{eq:swing}, and~\eqref{eq:gov} approximate the power system dynamics of a network with synchronous generators that are equipped with frequency droop controllers (i.e., primary frequency control). 

\begin{remark}
It is possible to consider nodes without generators where instead of~\eqref{eq:swing} the algebraic constraint $p^E_i + p^L_i = 0$ holds and thus leading to a \emph{differential-algebraic system}. Our modeling assumption can be recovered by applying a Kron reduction that eliminates all non-generator buses~\cite{dorfler_kron_2013}.
\end{remark}

\subsubsection{Input-Output Representation}

For our purposes, we write~\eqref{eq:angle_dyn},~\eqref{eq:swing} and~\eqref{eq:gov} in the form of a linear time-invariant system as given by~\eqref{eq:sys_dyn} with $p^C(t)$ as input and $p^L(t)$ as a disturbance. 
Namely, we have 
\begin{align*}
    x := \begin{bmatrix}
            \theta \\ \omega \\ p^M
        \end{bmatrix} \in \bbR^{3r}\,,  \, u := p^C(t) \, \text{, and }
        w :=  p^L(t) \, 
\end{align*}
with the system matrices defined as
\begin{align*}
    A := \begin{bmatrix}
        \bbzero_{r\times r} & \bbI_r & \bbzero_{r\times r}  \\
        - M^{-1} \laplac & - M^{-1} D & M^{-1} \\
        \bbzero_{r\times r}  & -T^{-1}R^{-1} & -T^{-1}
    \end{bmatrix} \, ,\\\\
    B := \begin{bmatrix}
    	\bbzero_{r\times r} \\ \bbzero_{r\times r} \\ T^{-1}
    \end{bmatrix} 
    \quad \text{and} \quad
    Q := \begin{bmatrix}
        \bbzero_{r\times r} \\ -M^{-1} \\ \bbzero_{r\times r}
        \end{bmatrix} \, .
\end{align*}

We also assume that the observable outputs of the system consist of the frequency at node 1 and the line flows, i.e., we define the output as $y := Cx$ where  
\begin{align*}
    C &:= \begin{bmatrix}
    \bbzero_{1 \times r} & \begin{matrix} 1 & \bbzero_{1 \times 2r-1} \end{matrix} \\
    \laplac^\ell & \bbzero_{s \times 2r}
    \end{bmatrix} \, .
\end{align*}

\subsubsection{Steady-State Map} By definition, $A$ cannot be asymptotically stable (and therefore not directly satisfy Assumption~\ref{ass:stab}) because the Laplacian $\laplac$ has a non-empty kernel that is associated with the translation invariance of the phase angles $\theta$. The output $y$ is however not affected by this unstable mode. Nevertheless, to apply Theorem~\ref{thm:main}, we need to eliminate the unstable mode of $A$ using an appropriate coordinate transformation of the internal state $x$. As a consequence Assumption~\ref{ass:stab} can be tested by solving the Lyapunov equation~\eqref{eq:ass_lyap} for this reduced system.

\subsection{Optimization Problem}
We consider the problem of optimizing an economic cost while at the same time controlling frequency and line congestion. Namely, we consider the problem
\begin{subequations}\label{eq:opt_prob}
\begin{align}
  \underset{u,x}{\text{minimize}} &\quad f(u) \\
  \text{subject to} & \quad x = H u + G w \\
            & \quad \underline{u} \leq  u \leq \overline{u}  \label{eq:opcon1}\\
            & \quad \underline{y} \leq  C x \leq \overline{y} \, , \label{eq:opcon2}
\end{align}
\end{subequations}
where $f: \bbR^r \rightarrow \bbR$ is a convex cost function and $\underline{u}, \overline{u} \in \bbR^r$ denote lower and upper limits on power generation setpoints\footnotemark.
Further, $\underline{y}, \overline{y} \in \bbR^{s+1}$ denote constraints on the output given as
$ \underline{y} := \left[ \begin{smallmatrix} 0 \\ -\overline{p}^\ell \end{smallmatrix} \right]$ and $ \overline{y} := \left[ \begin{smallmatrix} 0 \\ \overline{p}^\ell \end{smallmatrix} \right]$ where $\overline{p}^\ell \in \bbR^s$ denotes the line ratings.

\footnotetext{The economic cost and input constraints apply to set-points rather than actual power generation because, from the viewpoint of a grid operator, the (re)dispatch of generation is subject to contractual agreements and reserve limits rather than actual generation cost and plant constraints.}

In order to comply with our theoretical results we treat the operational constraints \eqref{eq:opcon1}-\eqref{eq:opcon2} as soft constraints by considering the augmented objective function
\begin{align*} 
    \Phi(x,u) := f(u) + \rho\left(\begin{bmatrix} u \\ Cx \end{bmatrix} - \begin{bmatrix} \overline{u} \\ \overline{y} \end{bmatrix}\right) + \rho\left(\begin{bmatrix} \underline{u} \\ \underline{y} \end{bmatrix} - \begin{bmatrix} u \\ Cx \end{bmatrix} \right) \, ,
\end{align*}
where $\rho(w) := \tfrac{1}{2} \| \max \{ 0, w \} \|^2_\Xi = \tfrac{1}{2}\sum_i \xi_i \|\max \{ 0, w_i \} \|^2$ is a unilateral quadratic penalty function with $\Xi = \text{diag}(\xi)$ being a weighting matrix. 
Note that $\rho$ is continuously differentiable and hence $\nabla \Phi$ can be evaluated and used as feedback control law of the form~\eqref{eq:fb_law}.

\begin{remark}
The optimization problem \eqref{eq:opt_prob} can be specialized in order to model individual ancillary services separately. For example, by setting $f(u)\equiv 0$, no constraints \eqref{eq:opcon1} on $u$, and output constraints \eqref{eq:opcon1} as $0 \le \omega_1 \le 0$, one obtains the augmented objective function
$\Phi(x,u) = \tfrac{1}{2} \omega_1^2$.
The gradient descent flow $-\epsilon \widetilde{H} \nabla \Phi(x,u)$ corresponds to
\[
\dot p^C = -\frac{\epsilon}{\bbone^T (D + R^{-1})\bbone} \bbone \omega_1
\]
where $\bbone$ is the vector of all ones.
As discussed in \cite{grammatico2017}, this corresponds to the standard industrial solution for AGC (with homogeneous participation factors).
\end{remark}

\section{Simulation Results}

We simulate the the proposed feedback interconnection using the standard IEEE 118-bus power system test case~\cite{zimmerman_matpower_2011} that we augment with reasonable, randomized values for $M$, $D$, $T$ and $R$ with mean \SI{5}{\s}, \SI{3}, \SI{4}{\s}, and \SI{0.25}{\Hz/\pu}, respectively. Furthermore, we impose a line flow limit of \SI{2.5}{\pu} on all lines which results in several lines running the risk of congestion under normal operation.
For the control algorithm the penalty parameters have been chosen as $\xi = 10^3$ for all constraints on relating power generation and line flows, and $\xi = 10^7$ for the frequency deviation $\omega_1$. This choice is mainly motivated by the difference in the order of magnitude of the respective quantities. To numerically integrate~\eqref{eq:ic_sys} we use an exact discretization of the LTI system~\eqref{eq:sys_dyn} in combination with a simple explicit Euler scheme for the interconnected system~\ref{eq:ic_sys}.

Our primary interest lies in the tightness of the stability bound provided by Theorem~\ref{thm:main}, namely for the setup under consideration we get $\epsilon^\star = 1.069\times 10^{-6}$. Experimentally we find that for $\epsilon$ smaller than approximately  $5\epsilon^\star$ the feedback interconnection is nevertheless asymptotically stable whereas for larger $\epsilon$ we observe instability.

This conservatism of our bound is to be expected, especially considering the limited amount of information and computation required to evaluate~$\epsilon^\star$. 
Nevertheless, we consider $\epsilon^\star$ to be of practical relevance, in particular since the bound comes with the guarantee that for every $\epsilon < \epsilon^\star$ the interconnected system is asymptotically stable. 

Previous works on real-time feedback-based optimization schemes~\cite{hauswirth_online_2017, dallanese2018, gan2016} have considered only the interconnection with a steady-state map and have shown that feedback controllers can reliably track the solution of a time-varying OPF problem, even for nonlinear setups. Hence, a second insight of our simulations is that the interconnection of a gradient-based controller with a dynamical system instead of an algebraic steady-state map does not significantly deteriorate the long-term tracking performance.

Hence, in Figure~\ref{fig:plot} we show the simulation results over a time span of \SI{300}{s} in which the feedback controller tries to track the solution of a standard DC OPF problem\footnote{Any solution of~\eqref{eq:opt_prob} has zero frequency deviation from the nominal value, hence the variable $\omega_1$ can be eliminated from the problem formulation, which results in a standard DC OPF problem.} of the form~\eqref{eq:opt_prob} under time-varying loads, i.e., non-constant disturbance $w(t)$. The optimal cost of this \emph{instantaneous} DC OPF problem is illustrated in the first panel of Figure~\ref{fig:plot} by the the dashed line. The minor violation of line and frequency constraints observed in the simulation is a consequence of control design that is based on soft constraints.

Additionally, we simulate the effect of a generator outage at \SI{100}{s} and a double line tripping at \SI{200}{s}. Both of which do not jeopardize overall stability. Furthermore, we do not update the steady-state map $H$ after the grid topology change caused by the line outages. Nevertheless, the controller with the inexact model achieves very good tracking performance as illustrated in the first panel of Figure~\ref{fig:plot}.

Overall, this simulation shows the robustness of feedback-based optimization against i) underlying dynamics, ii) disturbances in the form of load changes, and iii) model inaccuracy in the form of topology mismatch.

\begin{figure}[tb]
\def\scalingfactor{0.67}
\raggedright
\vspace{.1cm}
\includegraphics[scale=\scalingfactor,right]{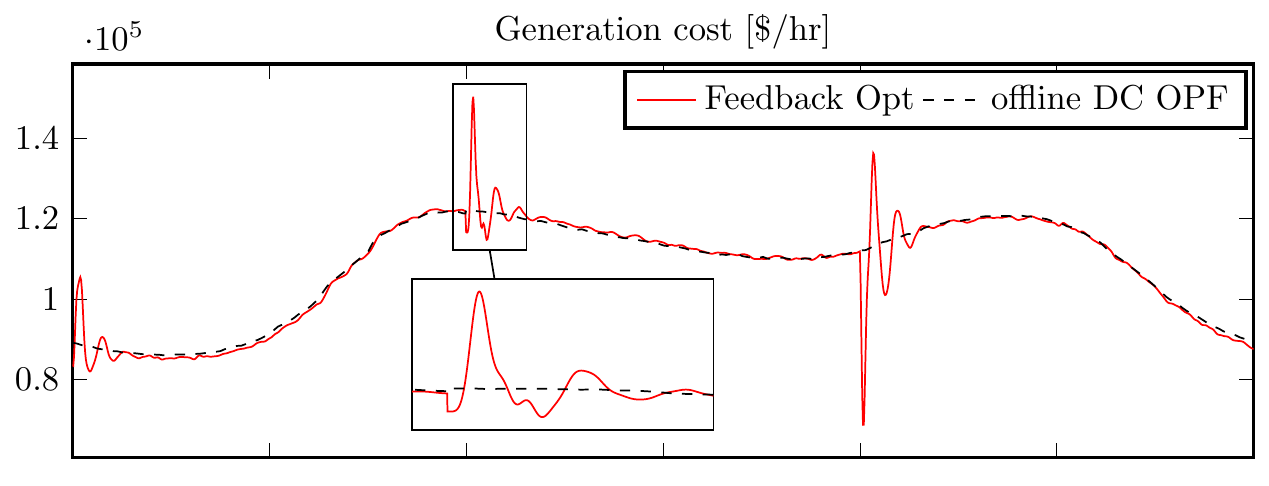}
\includegraphics[scale=\scalingfactor,right]{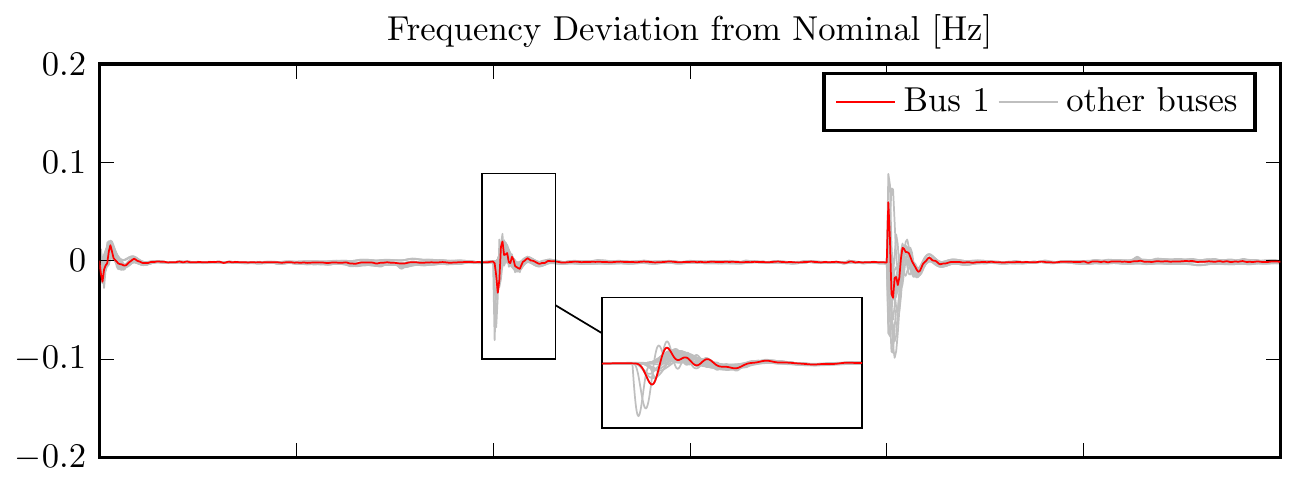}
\includegraphics[scale=\scalingfactor,right]{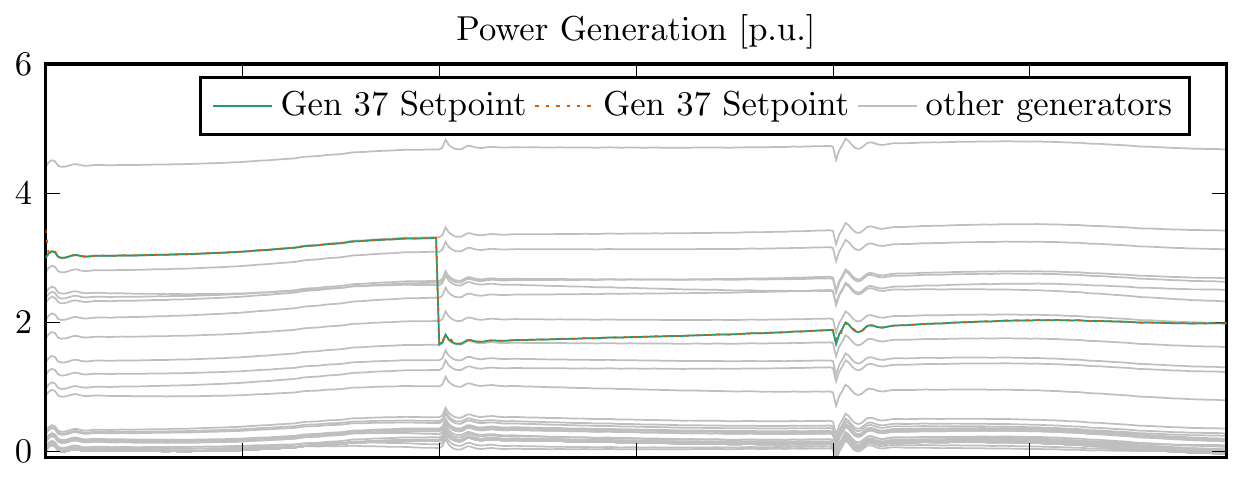}
\includegraphics[trim={-2.26mm 0 2.26mm 0}, scale=\scalingfactor,right]{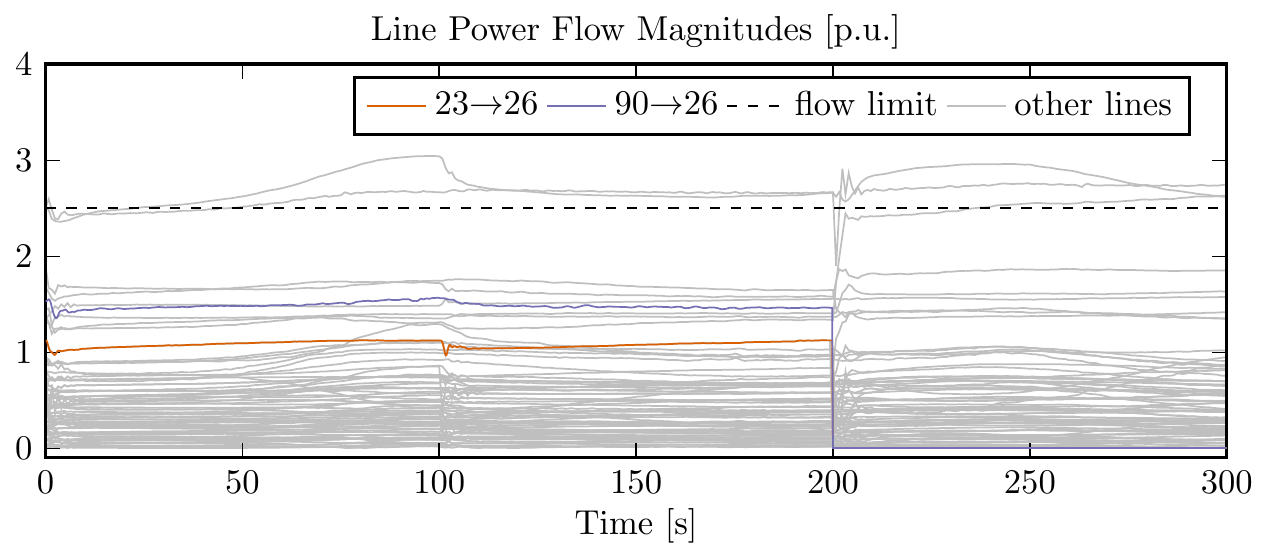}

\caption{Simulation results for the IEEE 118-bus test case. An outage of a \SI{200}{\MW} generation unit happens at \SI{100}{\s} (producing approx. \SI{175}{\MW} at the time of the outage) and results in the loss of half the generation capacity at the corresponding bus (the bus power injection is highlighted in the third panel). A double line tripping happens at \SI{200}{\s} (the corresponding line power flows are highlighted in the fourth panel).}
\label{fig:plot}
\end{figure}

\section{Conclusions}

In this paper we have considered the problem of designing  feedback controllers for an LTI plant that ensure i) internal stability of the interconnected system and ii) convergence of the system trajectories to the set of critical points of a prescribed optimization program.

The proposed solution consists in the design of a gradient-based feedback law which is based on a time-scale separation argument, i.e., replacing the plant with its algebraic steady-state map.
We then have derived a prescription on how to tune the gain of the feedback control loop in order to guarantee stable operation of the interconnected system.
The resulting bound is a direct function of meaningful quantities of both the plant and of the cost function, and requires very weak assumptions.
Moreover, the methodology that allows to derive this bound has very broad applicability.
We expect to extend the analysis to projected gradient and primal-dual schemes in future work.

We have applied this design approach to frequency regulation and economic re-dispatch mechanisms on an electric power grid. For the test case under consideration we have shown that our stability bound is only mildly conservative and allows to tune the feedback control law in an effective manner: The closed-loop system is capable of tracking the time-varying optimizer promptly, even in the presence of significant disturbances. 
The proposed design method can be readily applied to more complex and high dimensional power system models, enabling the use of feedback optimization for multiple ancillary services.

\bibliographystyle{IEEEtran}
\bibliography{IEEEabrv,bibliography_static}
\end{document}